\documentclass[a4paper,11pt]{article}

\usepackage[top=3.0cm, bottom=3.0cm, inner=3.0cm, outer=3.0cm,
includefoot]{geometry}

\usepackage{verbatim}
\usepackage{caption}
\usepackage{url}
\usepackage{amsmath}
\usepackage{geometry}
\usepackage{amssymb}
\usepackage{amsmath}
\usepackage{graphicx}
\usepackage{amsthm}
\usepackage{bbm}
\usepackage{float}
\usepackage{color,soul}
\usepackage{hyperref}
\usepackage[T1]{fontenc}
\usepackage[utf8]{inputenc}
\usepackage{authblk}
\usepackage{booktabs}
\usepackage{longtable}
\usepackage{enumerate}
\usepackage{tikz}
\usetikzlibrary{shapes.geometric}
\usetikzlibrary{arrows}
\usetikzlibrary{decorations.markings}
\usepackage{standalone}
\usepackage{indentfirst}
\usepackage{mathtools}

\newcommand{\eChar}{\begin{enumerate}[(i)]}
\newcommand{\eCharR}{\begin{enumerate}[(a)]}
\newcommand{\eBr}{\begin{enumerate}[(1)]}

\newcommand\floor[1]{\left\lfloor #1\right\rfloor}
\newcommand\ceil[1]{\left\lceil #1\right\rceil}

\title
{An extremal theorem for graphs with non-negative Bakry--\'Emery curvature}

\author{Jiawei Xie\thanks{Email: y30251370@mail.ecust.edu.cn}}
\author{Zhe You\thanks{Email: y30231280@mail.ecust.edu.cn}}
\affil{School of Mathematics, East China University of Science and Technology}
\date{}

\theoremstyle{plain}
\newtheorem{lemma}{Lemma}[section]
  {%
   \endlemma
   \endgroup}
   
\newtheorem{theorem}[lemma]{Theorem}

\theoremstyle{definition}

\newtheorem{definition}[lemma]{Definition}

\newtheorem{remark}[lemma]{Remark}

\numberwithin{equation}{section}

\begin{document}
\maketitle

\begin{abstract}
Recently, Chen, Liu, and You (An extremal theorem for positive curvature of graphs, arXiv:2607.02297) proved an extremal theorem for  positive Lin--Lu--Yau curvature.
They further proposed the similar problems for other discrete curvature.
In this paper, we prove a sharp extremal theorem  for non-negative Bakry--\'Emery curvature with non-normalized Laplacian: every graph of order \(n\geq 7\) with more than
\[
  \binom {n}{2}-\floor{\frac{n}{2}}-2
\]
edges satisfies $CD(0,\infty)$, and this threshold is optimal.

\end{abstract}

\textbf{Keywords:}  Bakry--\'Emery curvature, extremal graph theory.

\textbf{Mathematics Subject Classification:} 05C35, 53A70

\section{Introduction}
In recent years, many scholars have focused on the intersection of discrete curvature and combinatorial problems.
In particular, the combinatorial structure of graphs with lower Ricci curvature bounds has attracted much attention, such as diameter of amply regular graphs and distance regular graphs~\cite{Chen-CVPDE, Chen-JCTB, Xia}, degree~\cite{Hehl-regular, Hehl25, Bipartite graphs}, connectivity and edge-connectivity~\cite{Horn, CLY, CKL, LX, Bipartite graphs}.

Extremal graph theory is a central area of graph theory. 
One of its core questions is to find the maximum or minimum number of edges a graph can have under given constraints. 
This point of view has yielded many fundamental insights into discrete structures~\cite{Bollobas, Simonovits}. 
Classical illustrations include Turán-type problems, where the constraints forbid certain subgraphs. 
More recently, spectral extremal problems—in which the restrictions are placed on the eigenvalues of a graph—have also received significant attention~\cite{spectral-extremal, Nikiforov}.


An important view is that positive or large discrete Ricci curvature tends to force a graph to have many edges, which motivates us to consider the curvature extremal problems on graphs.
Recently, Chen, Liu, and You~\cite{chen2026ExtremalTheoremPositive} proved an extremal theorem for positive Lin--Lu--Yau curvature, which is a modified version of Ollivier's Ricci curvature~\cite{O09} on graphs introduced by~\cite{LLY11}. 
They further proposed the similar problem on Bakry-\'Emery curvature, which is another discrete Ricci curvature in Bakry-\'Emery's sense~\cite{Bakry} introduced by~\cite{LY10}.
Lin, You, and Zhao~\cite{Bipartite graphs} considered the extremal problem for positive Lin--Lu--Yau curvature on bipartite graphs.
A closely related criterion for positive Lin--Lu--Yau curvature, formulated in terms of forbidden subgraphs in the complement, was obtained in~\cite{CLY26}.

In this paper, we establish an extremal result  for Bakry-\'Emery curvature on graphs with non-normalized Laplacian.

\begin{theorem}\label{main}
 Let $G$ be a graph of order $n\geq 7$.
 If $G$ has more than
\begin{equation}\label{eq:threshold}
     \binom{n}{2}-\floor{\frac{n}{2}}-2
\end{equation}
edges, then $G$ satisfies $CD(0,\infty)$.
\end{theorem}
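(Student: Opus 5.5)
We work in the complement. Write $\overline{G}$ for the complement of $G$; the hypothesis says $\overline{G}$ has at most $\floor{n/2}+1$ edges. Bakry--\'Emery curvature with the non-normalized Laplacian is local: whether $CD(0,\infty)$ holds at a vertex $x$ depends only on the incomplete 2-ball around $x$. Since $G$ is very dense, every vertex has eccentricity at most $2$, and the 2-ball is all of $G$.

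The plan is to use the standard local matrix formulation. For each vertex $x$, $CD(0,\infty)$ at $x$ is equivalent to positive semidefiniteness of the matrix $\Gamma_2(x)$, viewed as a quadratic form in $f$ with $f(x)=0$. Using the Schur complement over the sphere $S_2(x)$, this reduces to a $d_x\times d_x$ matrix indexed by $S_1(x)$. Its entries are explicit in terms of:
\begin{itemize}
\item the degree $d_x$;
\item the number of triangles on each edge $xy$;
\item for each $z\in S_2(x)$, the number of its neighbours in $S_1(x)$, which we denote $d_z^-$.
\end{itemize}
For a graph whose complement has few edges, these quantities are almost those of $K_n$. We therefore write the reduced matrix as the $K_n$ matrix plus a perturbation. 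The $K_n$ matrix equals $\frac{n+2}{2}$ times a projection-like form and has a large spectral gap, roughly of order $n/2$ on vectors orthogonal to constants. The perturbation is controlled by the missing edges.

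\textbf{Step 1 (classify by non-edges).} Fix $x$ and let $m_x$ be the number of non-edges at $x$, i.e. $|S_2(x)|=m_x$. Let $k$ be the number of non-edges inside $S_1(x)$. Then $m_x+k\le \floor{n/2}+1$. Each $z\in S_2(x)$ has $d_z^-\ge d_x-(\text{non-edges at }z)$.

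\textbf{Step 2 (the main estimate).} Bound the quadratic form of the perturbed matrix from below. Non-edges inside $S_1(x)$ act roughly like the Laplacian of a sparse graph $H$ with at most $n/2+1$ edges. Its contribution is dominated by the $n/2$-sized spectral gap unless $H$ is concentrated, for example a large star. A vertex $z$ in $S_2(x)$ contributes a rank-one negative term of the form $-\frac{(\sum_{y\sim z} f(y))^2}{d_z^-}$ together with positive diagonal terms. The Schur complement from $S_2$ gives exactly this, and we show it is compensated whenever $d_z^-$ is large.

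\textbf{Step 3 (extremal configurations).} The near-threshold cases are where the estimate is tight: the complement is a perfect matching plus one or two extra edges, or $x$ has large complement degree. We handle these by direct computation of the small reduced matrix, exploiting the symmetry of the configuration. This is where $n\ge 7$ should be needed.

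I expect Step 2 to be the main obstacle: obtaining a uniform PSD bound when non-edges cluster at a single vertex, for instance a star in $\overline{G}$. To handle it, I would first split by the maximum degree $\Delta(\overline{G})$.
\begin{itemize}
\item If $\Delta(\overline{G})$ is small, a Gershgorin or diagonal-dominance argument works.
\item If $\Delta(\overline{G})$ is large, few complement edges remain elsewhere. The graph is then $K_n$ minus a star plus a few edges, whose curvature can be computed almost exactly.
\end{itemize}
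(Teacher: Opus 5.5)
Your outline sets up the right framework: the local form at $x$ with $f(x)=0$, elimination of $f(z)$ for $z\in S_2(x)$, the Laplacian of $H=\overline G[S_1(x)]$, and the budget $m_x+k\le\lfloor n/2\rfloor+1$. But the decisive estimate is never proved, and the mechanism you propose for it does not close.

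Work in $2\Gamma_2$ units, with $d=d_x$, $r=m_x$, $h=k$ and $S_z=N_G(z)\cap S_1(x)$. The $S_1(x)$-part is $\langle f,(\frac{3+3d}{2}I-J-2L_H)f\rangle$. After minimizing in $f(z)$, each $z\in S_2(x)$ contributes $\frac32\sum_{u\in S_z}f(u)^2-\frac{2}{d_z^-}(\sum_{u\in S_z}f(u))^2$.

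This contribution is not ``compensated whenever $d_z^-$ is large'': for $f\equiv c$ on $S_z$ it equals $-\frac12 d_z^-c^2$, whatever $d_z^-$ is. What holds, by Cauchy--Schwarz, is the uniform lower bound $-\frac12\sum_{u\in S_z}f(u)^2$. So $S_2(x)$ costs at most $\frac r2\sum_{u\in S_1(x)}f(u)^2$, and the $S_1(x)$-part must pay for it.

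Your case split does not pay it, because Gershgorin charges $-J$ and $-2L_H$ to the same rows. Take $r=0$ and a vertex $u$ with $\deg_H(u)=k$. Every off-diagonal entry of row $u$ is $\pm1$, so the disc gives only $\frac{n+2}{2}-2k$, which is negative once $k>\frac{n+2}{4}$. In that regime roughly $n/4$ further complement edges can still be placed arbitrarily. So this is not ``$K_n$ minus a star plus a few edges'', and no computation exploiting the symmetry of a few extremal configurations covers it.

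The missing idea is to split directions rather than rows. Write $f|_{S_1(x)}=c\mathbf 1+g$ with $g\perp\mathbf 1$. Then $J$ kills $g$, $L_H$ kills $\mathbf 1$, and the cross terms vanish, so the $S_1(x)$-part is at least $\frac{d+3}{2}\|c\mathbf 1\|^2+\bigl(\frac{3+3d}{2}-2\lambda_{\max}(L_H)\bigr)\|g\|^2$.

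Your ``spectral gap of order $n/2$'' is the excess of the orthogonal eigenvalue over the constant one. The target here is curvature $0$, not $\frac{n+2}{2}$. So the $L_H$ term and the $S_2$ loss may use up the whole orthogonal eigenvalue $\frac{3+3d}{2}$, not just the excess.

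Now use $\lambda_{\max}(L_H)\le\max_{uv\in E(H)}(d_u+d_v)\le h+1$, which is exact for stars, together with $r+h\le\lfloor n/2\rfloor+1$. You get $\frac{d+3}{2}-\frac r2=\frac{n+2-2r}{2}\ge0$. You also get $\frac{3+3d}{2}-2(h+1)-\frac r2\ge\frac{3n}{2}-2\lfloor n/2\rfloor-4\ge0$ for $n\ge7$; this is exactly where $n\ge7$ enters. This handles stars uniformly, with no case analysis on $\Delta(\overline G)$ and no extremal computations.
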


The threshold in~\eqref{eq:threshold} is sharp for every \(n\geq 7\). 
Moreover, the assumption \(n\geq 7\) is essential. These sharpness issues are discussed in Section~\ref{section:sharpness}.

Theorem~\ref{main} can be equivalently formulated as a criterion for non-negative Bakry-\'Emery curvature in terms of the number of edges in the complement. More
precisely, let \(G\) be a graph of order \(n\geq 7\). 
If the complement
\(\overline{G}\) has at most
\[
  \left\lfloor\frac{n}{2}\right\rfloor+1
\]
edges, then \(G\) has non-negative Bakry-\'Emery curvature.


Throughout the paper, we use the following notation. 
Let $G=(V,E)$ be a simple finite graph with vertex set $V$ and edge set $E$.
Denote by $\overline{G}$ the complement graph of $G$.
For any $x\in V$, let $N_G(x)$ be the set of neighbors of $x$ and let $d_x\coloneqq|N_G(x)|$ be its degree. 
For any two vertices $x$ and $y$, we denote the distance between them by $d(x,y)$.

\section{Preliminaries}
Before introducing Bakry-\'Emery curvature on graphs, we first recall the concept of $\Gamma$-calculus.
\begin{definition}[$\Gamma$ and $\Gamma_2$ operators]
    Let $G=(V, E)$ be a locally finite simple graph. For any two functions $f, g: V \rightarrow \mathbb{R}$, we define
$$
\begin{aligned}
2 \Gamma(f, g) & :=\Delta(f g)-f \Delta g-g \Delta f, \\
2 \Gamma_2(f, g) & :=\Delta \Gamma(f, g)-\Gamma(f, \Delta g)-\Gamma(\Delta f, g) .
\end{aligned}
$$
\end{definition}

For convenience, we write $\Gamma(f):=\Gamma(f, f)$ and $\Gamma_2(f, f)\coloneqq\Gamma_2(f)$.
Note that 
\[
  \Gamma(f,g)(x)
  =\frac12\bigl(\Delta(fg)(x)-f(x)\Delta g(x)-g(x)\Delta f(x)\bigr)
  =\frac12\sum_{y\sim x}(f(y)-f(x))(g(y)-g(x)).
\]
Now we can give the definition of Bakry-\'Emery curvature on graphs as follows.
\begin{definition}[Bakry-\'Emery curvature]\label{def2.2}
     Let $G=(V, E)$ be a locally finite simple graph. Let $K\in \mathbb{R}$ and $N \in(0, \infty]$. We say that a vertex $x \in V$ satisfies the curvature dimension condition $CD(K,N,x)$, if for any $f\in C(V)$, we have
$$\Gamma_2(f)(x) \geq \frac{1}{N}(\Delta f(x))^2+K \Gamma(f)(x) .$$
We call $K$ a lower Ricci curvature bound of $x$, and $N$ a dimension parameter. The graph $G=(V, E)$ satisfies $C D(K, N)$, if all the vertices satisfy $CD(K, N)$. At a vertex $x \in V$, let $K(G, x ; N)$ be the largest $K$ such that curvature dimension condition holds for all functions $f$ at $x$ for a given $N$. We call $K(G, x ; N)$ the Bakry-\'Emery curvature function of $x$.
\end{definition}
Here, we use the notation $\frac{1}{N}=0$ when $N=\infty$.
For any locally finite graph $G$, the  non-normalized graph Laplacian $\Delta$ is defined as $$\Delta f(x)\coloneqq \sum_{y: xy\in E(G)}(f(y)-f(x)), \text{ for any $f: V(G)\to \mathbb{R}$ and any $x\in V(G)$}.$$

Let $\lambda_{max}(G)$ be the largest Laplacian eigenvalue of graph $G$.
We recall an estimate for $\lambda_{max}(G)$ as follows.
\begin{lemma}[{\cite[Theorem 2]{LMA}}]\label{lambda<m+1}
    For any graph $G$, $\lambda_{max}(G)\leq \max\{d_u+d_v:uv\in E(G)\}$.
\end{lemma}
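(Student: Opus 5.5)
The plan is to pass from the Laplacian $L=D-A$ to the signless Laplacian $Q=D+A$, and then to an edge-indexed matrix whose row sums are exactly the quantities $d_u+d_v$. If $G$ has no edges, then $L=0$ and there is nothing to prove, so assume $E(G)\neq\emptyset$.

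\textbf{Step 1: reduce to $Q$.} Let $x$ be a unit eigenvector of $L$ for $\lambda_{max}(G)$, and let $|x|$ denote its entrywise absolute value. Then
\[
\lambda_{max}(G)=x^{T}Lx=\sum_{uv\in E}(x_u-x_v)^2\le\sum_{uv\in E}(|x_u|+|x_v|)^2=|x|^{T}Q|x|\le\lambda_{max}(Q),
\]
by the Rayleigh quotient characterization of the largest eigenvalue of the symmetric matrix $Q$. It therefore suffices to bound $\lambda_{max}(Q)$.

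\textbf{Step 2: factor $Q$ through the incidence matrix.} Let $R$ be the unsigned vertex--edge incidence matrix, of size $|V|\times|E|$. Then $Q=RR^{T}$. The matrices $RR^{T}$ and $R^{T}R$ have the same nonzero eigenvalues, so $\lambda_{max}(Q)=\lambda_{max}(R^{T}R)$. The matrix $B=R^{T}R$ is indexed by edges and has nonnegative entries. Its diagonal entries all equal $2$. For $e\neq f$, the entry $B_{ef}$ equals $1$ if $e$ and $f$ share an endpoint and $0$ otherwise.

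\textbf{Step 3: bound by row sums.} For an edge $e=uv$, the edges sharing an endpoint with $e$ are the $d_u-1$ other edges at $u$ and the $d_v-1$ other edges at $v$. These two sets are disjoint because $G$ is simple. Hence the row sum of $B$ at $e$ is
\[
2+(d_u-1)+(d_v-1)=d_u+d_v .
\]
For a nonnegative matrix, the spectral radius is at most the maximum row sum; equivalently, $\|B\|_\infty$ bounds every eigenvalue in absolute value. Therefore
\[
\lambda_{max}(G)\le\lambda_{max}(Q)=\lambda_{max}(B)\le\max\{d_u+d_v:uv\in E(G)\}.
\]

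The only step needing care is the comparison in Step 1 between the Laplacian and the signless Laplacian. It follows from the Rayleigh quotient together with the elementary inequality $(a-b)^2\le(|a|+|b|)^2$. The remaining steps are standard linear algebra.
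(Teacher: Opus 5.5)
Your proof is correct. The inequality $(x_u-x_v)^2\le(|x_u|+|x_v|)^2$ gives $\lambda_{max}(L)\le\lambda_{max}(D+A)$. Simplicity of $G$ ensures that $R^{T}R$ has $2$ on the diagonal, $0/1$ entries off it, and row sums $d_u+d_v$. The row-sum bound then finishes, since $E\neq\emptyset$ makes $\lambda_{max}(D+A)$ a nonzero eigenvalue shared with $R^{T}R$.

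The paper gives no proof of this lemma; it simply imports it from \cite{LMA}. What you have written is essentially the classical line-graph argument for this bound.

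One optional simplification: Step~1 can be skipped. With an oriented incidence matrix $M$ one has $L=MM^{T}$ directly. The matrix $M^{T}M$ has diagonal entries $2$ and entries $\pm1$ exactly at pairs of edges sharing an endpoint, so its absolute row sums are again $d_u+d_v$. Your detour costs nothing, however, and yields $\lambda_{max}(L)\le\lambda_{max}(D+A)$ as a by-product.
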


\section{\texorpdfstring{Proof of main theorem}{Proof of the main theorem}}

In this section, we prove Theorem~\ref{main}.
By the assumption of the theorem, there are at most $ \left\lfloor \frac n2\right\rfloor+1$ edges in the complement graph $\overline{G}$ of $G$.

Fix a vertex $x\in V(G)$.
Let $S=N_G(x)$ and $R=V(G)\setminus(S\cup\{x\})$.
Set $|S|=d$ and $|R|=r$.
Since the terms  $\Gamma(f),\Gamma_2(f)$ and $\Delta f$ are all invariant by adding a constant to $f$,
it suffices to consider $f(x)=0$.

\begin{lemma}\label{P+Q}
    For any $f\in C(V)$ such that $f(x)=0$, we have
    \[
  2\Gamma_2(f)(x)=P_x(f)+\sum_{z\in R}Q_{x,z}(f),
\]
where
\begin{align}
  P_x(f)
  &=\left(\sum_{u\in S}f(u)\right)^2
    +\frac{3-d}{2}\sum_{u\in S}f(u)^2
    +2\sum_{uv\in E_G(S)}(f(u)-f(v))^2,\label{eq:P-def}\\
  Q_{x,z}(f)
  &=\sum_{u\in S\cap N_G(z)}
    \left(\frac12 f(z)^2-2f(u)f(z)+\frac32 f(u)^2\right).\label{eq:Q-def}
\end{align}
Here $E_G(S)$ denotes the set of edges of induced subgraph $G[S]$ in graph $G$.
\end{lemma}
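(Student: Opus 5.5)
The plan is a direct expansion of the definitions at the fixed vertex $x$. Using $2\Gamma_2(f)=\Delta\Gamma(f)-2\Gamma(f,\Delta f)$, I will compute $\Delta\Gamma(f)(x)$ and $-2\Gamma(f,\Delta f)(x)$ separately, with $f(x)=0$ throughout. I will then sort every resulting term by where the vertices involved lie. Since $x$ sees only $S$, each term involves a vertex $u\in S$ and possibly one neighbour $w$ of $u$. That $w$ is either $x$ itself, another vertex $v\in S$, or a vertex $z\in R$. This three-way split matches the three pieces of $P_x$ and the sum of the $Q_{x,z}$.

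\textbf{First term.} Here $\Gamma(f)(x)=\frac12\sum_{u\in S}f(u)^2$, so
\[
\Delta\Gamma(f)(x)=\sum_{u\in S}\Gamma(f)(u)-\frac d2\sum_{u\in S}f(u)^2,
\]
with $\Gamma(f)(u)=\frac12\sum_{w\sim u}(f(w)-f(u))^2$. Splitting the inner sum by the type of $w$ gives three contributions:
\begin{itemize}
\item $w=x$ contributes $\frac12 f(u)^2$;
\item $w=v\in S$ contributes, after summing over $u$, $\sum_{uv\in E_G(S)}(f(u)-f(v))^2$, since each edge is seen from both ends;
\item $w=z\in R$ contributes $\frac12(f(z)-f(u))^2$.
\end{itemize}

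\textbf{Second term.} Since $f(x)=0$ and $\Delta f(x)=A:=\sum_{u\in S}f(u)$,
\[
-2\Gamma(f,\Delta f)(x)=-\sum_{u\in S}f(u)\bigl(\Delta f(u)-A\bigr)=A^2-\sum_{u\in S}f(u)\Delta f(u).
\]
I expand $\Delta f(u)$ with the same split: $-f(u)$ from $w=x$, then $\sum_{v\in S,\,v\sim u}(f(v)-f(u))$, then $\sum_{z\in R,\,z\sim u}(f(z)-f(u))$. The $S$–$S$ part, symmetrized over the two orientations of each edge $uv$, gives $-[f(u)(f(v)-f(u))+f(v)(f(u)-f(v))]=(f(u)-f(v))^2$ per edge. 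Together with the first term, this yields the coefficient $2$ on $\sum_{uv\in E_G(S)}(f(u)-f(v))^2$.

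\textbf{Collecting terms.} The diagonal terms give $\bigl(\frac12+1-\frac d2\bigr)f(u)^2=\frac{3-d}{2}f(u)^2$, and $A^2$ is the square term in $P_x$. For each pair $u\in S$, $z\in R$ with $uz\in E$, the contributions combine to
\[
\frac12(f(z)-f(u))^2-f(u)f(z)+f(u)^2=\frac12f(z)^2-2f(u)f(z)+\frac32f(u)^2.
\]
Regrouping these by $z$ gives exactly $Q_{x,z}(f)$ summed over $z\in R$.

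There is no real obstacle, only bookkeeping. The points needing care are the factor $\frac12$ in $\Gamma$, the double counting of edges inside $S$, and the use of $f(x)=0$. One should also note that vertices at distance $\geq 3$ from $x$ never appear. Neither $\Delta\Gamma(f)(x)$ nor $\Gamma(f,\Delta f)(x)$ reaches beyond neighbours of neighbours, so only $R$-vertices adjacent to $S$ contribute. Any $z\in R$ with $S\cap N_G(z)=\emptyset$ simply has $Q_{x,z}=0$.
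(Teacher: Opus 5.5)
Your proposal is correct and follows essentially the same route as the paper. Both expand $\Delta\Gamma(f)(x)-2\Gamma(f,\Delta f)(x)$ with $f(x)=0$, split the neighbours $w$ of each $u\in S$ into $w=x$, $w\in S$ and $w\in R$ (the paper's $I_x$, $I_S$, $I_R$), and collect terms to recover the coefficient $\frac{3-d}{2}$, the factor $2$ on the $E_G(S)$ sum, and the summands of $Q_{x,z}$.
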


 \begin{proof}
 Since $f(x)=0$, we have
\[
  \Gamma(f)(x)=\frac12\sum_{u\in S}f(u)^2
  \quad \text{ and }\quad
  \Delta f(x)=\sum_{u\in S}f(u).
\]
Thus,
\begin{align*}
  \Delta\Gamma(f)(x)
  &=\sum_{u\in S}\bigl(\Gamma(f)(u)-\Gamma(f)(x)\bigr)  \\
  &=\frac12\sum_{u\in S}\sum_{w\sim u}(f(w)-f(u))^2
    -\frac d2\sum_{u\in S}f(u)^2.
\end{align*}
Also we obtain
\[
  2\Gamma(f,\Delta f)(x)
  =\sum_{u\in S}f(u)\bigl(\Delta f(u)-\Delta f(x)\bigr)
  =\sum_{u\in S}f(u)\Delta f(u)-\left(\sum_{u\in S}f(u)\right)^2.
\]
By the definition,  we have
\begin{align*}
 2\Gamma_2(f)(x)&=\Delta\Gamma(f)(x)-2\Gamma(f,\Delta f)(x)\\
  &=\frac{1}{2}\sum_{u\in S}\sum_{w\sim u}(f(w)-f(u))^2
   -\frac{d}{2}\sum_{u\in S}f(u)^2
   -\sum_{u\in S}f(u)\Delta f(u)
   +\left(\sum_{u\in S}f(u)\right)^2\\
   &=\left(\sum_{u\in S}f(u)\right)^2
    -\frac {d}{2}\sum_{u\in S}f(u)^2+I_x+I_S+I_R,
\end{align*}
where 
\begin{align*}
     I_x &:=\sum_{u\in S}
    \left[\frac12(f(x)-f(u))^2-f(u)(f(x)-f(u))\right],\\
  I_S &:=\sum_{u\in S}\sum_{\substack{v\in S\\ uv\in E(G)}}
    \left[\frac12(f(v)-f(u))^2-f(u)(f(v)-f(u))\right],\\
  I_R &:=\sum_{u\in S}\sum_{\substack{z\in R\\ uz\in E(G)}}
    \left[\frac12(f(z)-f(u))^2-f(u)(f(z)-f(u))\right].
\end{align*}
Observe that $I_x$, $I_S$, and $I_R$ are obtained by dividing $\frac{1}{2}\sum_{u\in S}\sum_{w\sim u}(f(w)-f(u))^2-\sum_{u\in S}f(u)\Delta f(u)$ into three parts according to the neighbors of $u\in S$.
By direct calculation, we have
\begin{align*}
  I_x
  =\sum_{u\in S}\left(\frac{1}{2} f(u)^2+f(u)^2\right)
  =\frac{3}{2}\sum_{u\in S}f(u)^2,
\end{align*}
\begin{align*}
  I_S
  &=\sum_{uv\in E_G(S)}
    \left[\frac{1}{2}(f(v)-f(u))^2-f(u)(f(v)-f(u))\right]  \\
  &\quad+\sum_{vu\in E_G(S)}
    \left[\frac12(f(u)-f(v))^2-f(v)(f(u)-f(v))\right]  \\
  &=\sum_{uv\in E_G(S)}
    \left[(f(u)-f(v))^2+f(u)^2+f(v)^2-2f(u)f(v)\right]  \\
  &=2\sum_{uv\in E_G(S)}(f(u)-f(v))^2,
\end{align*}
and
\begin{align*}
  I_R
  &=\sum_{z\in R}\sum_{u\in S\cap N_G(z)}
    \left[\frac{1}{2}(f(z)-f(u))^2-f(u)(f(z)-f(u))\right]  \\
  &=\sum_{z\in R}\sum_{u\in S\cap N_G(z)}
    \left[\frac{1}{2} f(z)^2-2f(u)f(z)+\frac{3}{2} f(u)^2\right].
\end{align*}
Combined with $I_x$, $I_R$, and $I_R$, we obtain
\begin{align*}
  2\Gamma_2(f)(x)
  &=\left(\sum_{u\in S}f(u)\right)^2
    +\frac{3-d}{2}\sum_{u\in S}f(u)^2
    +2\sum_{uv\in E_G(S)}(f(u)-f(v))^2 \\
  &\quad
    +\sum_{z\in R}\sum_{u\in S\cap N_G(z)}
    \left(\frac12 f(z)^2-2f(u)f(z)+\frac32 f(u)^2\right)\\
    &=P_x(f)+\sum_{z\in R}Q_{x,z}(f).
\end{align*}
The proof is complete.
 \end{proof}

\begin{remark}
The vertices which really influence $\Gamma_2(f)(x)$ are $x$, neighbors of $x$, and 2-neighbors $S_2(x)\coloneqq \{z|\ d(x,z)=2\}$ of $x$.
Thus, the second term $\sum_{z\in R}Q_{x,z}(f)$ is equal to  $\sum_{z\in S_2(x)}Q_{x,z}(f)$.
\end{remark}
Our aim is to estimate $P_x(f)$ and $\sum_{z\in R}Q_{x,z}(f)$ respectively.
Denote $S_z=N_G(z)\cap S$.
\begin{lemma}\label{Q(f)}
   For any $z\in R$ and $f\in C(V)$ such that $f(x)=0$, we have
\[
  Q_{x,z}(f)\ge -\frac12\sum_{u\in S_z}f(u)^2.
\]
Moreover,
\[
  \sum_{z\in R}Q_{x,z}(f)
  \ge -\frac r2\sum_{u\in S}f(u)^2.
\]
\end{lemma}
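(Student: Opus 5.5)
Each summand of $Q_{x,z}(f)$ in \eqref{eq:Q-def} is a quadratic form in the two variables $a=f(u)$ and $t=f(z)$. I would bound each such term from below by a multiple of $a^2$ alone, and then sum, first over $u\in S_z$ and then over $z\in R$.

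\textbf{Pointwise bound.} For fixed $u\in S_z$, write $a=f(u)$ and $t=f(z)$. Completing the square in $t$ gives
\[
  \frac12 t^2-2at+\frac32 a^2
  =\frac12\,(t-2a)^2-\frac12 a^2
  \ \ge\ -\frac12 a^2 .
\]
So every summand in \eqref{eq:Q-def} is at least $-\frac12 f(u)^2$. This holds regardless of the value of $f(z)$, even though $f(z)$ is shared across all $u\in S_z$. Summing over $u\in S_z=S\cap N_G(z)$ yields
\[
  Q_{x,z}(f)\ge -\frac12\sum_{u\in S_z}f(u)^2 ,
\]
which is the first claim.

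\textbf{Summing over $z\in R$.} I would sum the first inequality over $z\in R$ and exchange the order of summation:
\[
  \sum_{z\in R}\sum_{u\in S_z}f(u)^2=\sum_{u\in S}\bigl|N_G(u)\cap R\bigr|\,f(u)^2 .
\]
Since $|N_G(u)\cap R|\le |R|=r$ and $f(u)^2\ge0$, the right-hand side is at most $r\sum_{u\in S}f(u)^2$. This gives
\[
  \sum_{z\in R}Q_{x,z}(f)\ge -\frac r2\sum_{u\in S}f(u)^2 .
\]

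\textbf{Main obstacle.} There is no real difficulty here. The only points needing care are that completing the square in $f(z)$ is valid termwise even though $f(z)$ appears in several terms, and that the double counting keeps the correct orientation, bounding $\sum_{z\in R}\sum_{u\in S_z}$ by the crude count $r$ per vertex $u$.
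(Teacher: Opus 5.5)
Your proof is correct and follows the same outline as the paper: complete the square in $f(z)$, then double count over $z\in R$ using $|N_G(u)\cap R|\le r$. The difference is that you complete the square termwise, $\frac12 t^2-2at+\frac32 a^2=\frac12(t-2a)^2-\frac12 a^2$, which gives $-\frac12 f(u)^2$ per summand at once; the paper instead completes the square jointly in the shared variable $f(z)$, obtaining the exact minimum $\frac32\sum_{u\in S_z}f(u)^2-2(\sum_{u\in S_z}f(u))^2/|S_z|$, and then needs Cauchy--Schwarz and a separate case $S_z=\emptyset$ to reach the same bound, so your version is slightly more direct.
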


\begin{proof}
  If $S_z=\emptyset$, then $Q_{x,z}(f)=0$.
  Assume that $|S_z|>0$.
  Then
  \begin{align*}
      Q_{x,z}(f) &=\sum_{u\in S_z}
    \left(\frac{1}{2} f(z)^2-2f(u)f(z)+\frac32 f(u)^2\right)\\
      &=\frac{|S_z|}{2}f(z)^2 -2f(z)\sum_{u\in S_z}f(u) +\frac{3}{2}\sum_{u\in S_z}f(u)^2\\
      &=\frac{|S_z|}{2}
    \left(f(z)-\frac{2\sum_{u\in S_z}f(u)}{|S_z|}\right)^2
    +\frac{3}{2}\sum_{u\in S_z}f(u)^2
    -\frac{2\left(\sum_{u\in S_z}f(u)\right)^2}{|S_z|}\\
    &\geq \frac{3}{2}\sum_{u\in S_z}f(u)^2
    -\frac{2\left(\sum_{u\in S_z}f(u)\right)^2}{|S_z|}.
  \end{align*}
By Cauchy--Schwarz inequality, we have
\[
  \left(\sum_{u\in S_z}f(u)\right)^2
  \le |S_z|\sum_{u\in S_z}f(u)^2.
\]
Therefore,
\[
  Q_{x,z}(f)
  \ge \frac32\sum_{u\in S_z}f(u)^2
      -2\sum_{u\in S_z}f(u)^2
  =-\frac12\sum_{u\in S_z}f(u)^2.
\]
Summing over $z\in R$, we get
\[
  \sum_{z\in R}Q_{x,z}(f)
  \ge -\frac12\sum_{z\in R}\sum_{u\in S_z}f(u)^2
  = -\frac12\sum_{u\in S}|N_G(u)\cap R|\,f(u)^2.
\]
Since $|N_G(u)\cap R|\le r$, we have
\[
  \sum_{z\in R}Q_{x,z}(f)
  \ge -\frac r2\sum_{u\in S}f(u)^2.
\]
The proof is complete.
\end{proof}

\begin{lemma}\label{P(f)}
    $P_x(f)\ge \alpha\sum_{u\in S}f(u)^2,$
where
$\alpha=\min\left\{\frac{d+3}{2},\frac{3+3d}{2}-2(h+1)\right\}$.
\end{lemma}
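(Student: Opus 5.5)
The plan is to write $P_x(f)$ as a quadratic form $f^{T}Mf$ on $\mathbb{R}^S$ and bound the smallest eigenvalue of $M$. I read $h$ as the number of edges of $H:=\overline{G}[S]$, i.e.\ the non-adjacent pairs inside $S$; this is consistent with the shape of $\alpha$. Write $L_H$ for the Laplacian matrix of $H$, $J$ for the all-ones $d\times d$ matrix, and regard $f$ as a vector indexed by $S$.

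First I rewrite the edge sum in \eqref{eq:P-def}. The edges of $G[S]$ are all pairs in $S$ minus the edges of $H$, so
\[
\sum_{uv\in E_G(S)}(f(u)-f(v))^2=\sum_{\{u,v\}\subseteq S}(f(u)-f(v))^2-\sum_{uv\in E(H)}(f(u)-f(v))^2 .
\]
Moreover,
\[
\sum_{\{u,v\}\subseteq S}(f(u)-f(v))^2=d\sum_{u\in S}f(u)^2-\Bigl(\sum_{u\in S}f(u)\Bigr)^2=f^{T}(dI-J)f,
\]
and $\sum_{uv\in E(H)}(f(u)-f(v))^2=f^{T}L_Hf$. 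Substituting these into \eqref{eq:P-def} gives
\[
P_x(f)=f^{T}\Bigl(\tfrac{3+3d}{2}I-J-2L_H\Bigr)f .
\]

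Next I use that $J$ and $L_H$ commute, since $L_H\mathbf 1=0$ and $L_H$ is symmetric. Hence $M$ has an orthonormal eigenbasis containing $\mathbf 1/\sqrt d$, with the remaining eigenvectors in $\mathbf 1^{\perp}$. On $\mathbf 1$, $J$ acts as $d$ and $L_H$ as $0$, so the eigenvalue is $\frac{3+3d}{2}-d=\frac{d+3}{2}$. On $\mathbf 1^{\perp}$, $J$ vanishes, so the eigenvalues are $\frac{3+3d}{2}-2\mu$ with $\mu$ a Laplacian eigenvalue of $H$. Therefore $P_x(f)\ge \min\{\frac{d+3}{2},\frac{3+3d}{2}-2\lambda_{max}(H)\}\sum_{u\in S}f(u)^2$.

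The remaining step, which I expect to be the only real point, is the bound $\lambda_{max}(H)\le h+1$.
\begin{enumerate}[(1)]
\item If $H$ has no edges, then $\lambda_{max}(H)=0\le h+1$.
\item Otherwise I apply Lemma~\ref{lambda<m+1} to $H$. For any edge $uv$ of $H$, the edges at $u$ together with the edges at $v$ number $d_u+d_v-1$, and this is at most $h$, because the edge $uv$ is counted twice in $d_u+d_v$. So $d_u+d_v\le h+1$, and hence $\lambda_{max}(H)\le h+1$.
\end{enumerate}
Plugging this into the eigenvalue bound yields $P_x(f)\ge\alpha\sum_{u\in S}f(u)^2$ with $\alpha$ as stated.
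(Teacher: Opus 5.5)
Your proposal is correct and follows essentially the same route as the paper. Both proofs use the same rewriting $P_x(f)=f^{T}\bigl(\frac{3+3d}{2}I-J-2L_H\bigr)f$ and the same split into the $\mathbf 1$ direction and $\mathbf 1^{\perp}$: you argue via the commuting pair $J$, $L_H$, while the paper writes $f|_S=c\mathbf 1+g$ and shows the cross term vanishes. Both then apply Lemma~\ref{lambda<m+1} to get $\lambda_{max}(H)\le h+1$. The paper only asserts that last inequality, whereas your count $d_u+d_v-1\le h$ and your separate edgeless case actually justify it.
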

\begin{proof}
    If $S=\emptyset$, there is nothing to prove.
   Assume that $d\geq 1$.
   Let $H\coloneqq \overline{G}[S]$.
Hence $E_G(S)=\binom{S}{2}\setminus E(H)$.
Therefore, we have
\begin{align}
  \sum_{uv\in E_G(S)}(f(u)-f(v))^2
  &=\sum_{\{u,v\}\in \binom{S}{2}}(f(u)-f(v))^2
    -\sum_{uv\in E(H)}(f(u)-f(v))^2 \nonumber\\
&=\frac{1}{2}\sum_{u,v\in S}(f(u)-f(v))^2 -\sum_{uv\in E(H)}(f(u)-f(v))^2 \nonumber\\
&=\frac{1}{2}\left(2d\sum_{u\in S}f(u)^2-2\sum_{u,v\in S}f(u)f(v) \right) -\sum_{uv\in E(H)}(f(u)-f(v))^2 \nonumber\\
  &=d\sum_{u\in S}f(u)^2
    -\left(\sum_{u\in S}f(u)\right)^2
    -\sum_{uv\in E(H)}(f(u)-f(v))^2 .\label{eq:EGS-H-expanded}
\end{align}
Substituting~\eqref{eq:EGS-H-expanded} into~\eqref{eq:P-def}, we have
\begin{align}
  P_x(f)
  &=\left(\sum_{u\in S}f(u)\right)^2
    +\frac{3-d}{2}\sum_{u\in S}f(u)^2
    +2\sum_{uv\in E_G(S)}(f(u)-f(v))^2 \nonumber\\
  &=\left(\sum_{u\in S}f(u)\right)^2
    +\frac{3-d}{2}\sum_{u\in S}f(u)^2 \nonumber\\
  &\quad
    +2d\sum_{u\in S}f(u)^2
    -2\left(\sum_{u\in S}f(u)\right)^2
    -2\sum_{uv\in E(H)}(f(u)-f(v))^2 \nonumber\\
  &=\frac{3+3d}{2}\sum_{u\in S}f(u)^2
    -\left(\sum_{u\in S}f(u)\right)^2
    -2\sum_{uv\in E(H)}(f(u)-f(v))^2 .\label{eq:P-matrix-before}
\end{align}
Define the inner product on vertex subset $S$ as $\langle \phi,\psi\rangle_S\coloneqq\sum_{u\in S}\phi(u)\psi(u)$.
It follows that
\[
  \langle \phi,J\psi\rangle_S
  =\left(\sum_{u\in S}\phi(u)\right)
   \left(\sum_{u\in S}\psi(u)\right)
\]
and
\[
  \langle \phi,L_H\psi\rangle_S
  =\sum_{uv\in E(H)}(\phi(u)-\phi(v))(\psi(u)-\psi(v)),
\]
where $J$ denotes the all-one matrix and $L_H$ denotes the Laplacian matrix on $H$.
By~\eqref{eq:P-matrix-before}, we have
\begin{equation}\label{eq:P-operator-form-detailed}
  P_x(f)
  =\left\langle f|_S,
  \left(\frac{3+3d}{2}I-J-2L_H\right)f|_S
  \right\rangle_S .
\end{equation}
Decompose \(f|_S\) orthogonally as
$f|_S=c\mathbf 1+g$, where $ g\perp {\mathbf 1}$.
Thus $\langle c{\bf 1},g\rangle_S=0$
and
  $\|f|_S\|_S^2=\|c{\bf 1}\|_S^2+\|g\|_S^2
  =\sum_{u\in S}f(u)^2$.
Let $A\coloneqq\frac{3+3d}{2}I-J-2L_H$.
Combining with
$J{\bf 1}=d{\bf 1}$, $Jg=\left(\sum_{u\in S}g(u)\right){\bf 1}=0$, and $L_H{\bf 1}=0$,
we have
\begin{align}
  \langle c{\bf 1},A(c{\bf 1})\rangle_S
  &=\frac{3+3d}{2}\|c{\bf 1}\|_S^2
    -\langle c{\bf 1},J(c{\bf 1})\rangle_S
    -2\langle c{\bf 1},L_H(c{\bf 1})\rangle_S \nonumber\\
  &=\frac{3+3d}{2}\|c{\bf 1}\|_S^2
    -d\|c{\bf 1}\|_S^2 \nonumber\\
  &=\frac{d+3}{2}\|c{\bf 1}\|_S^2,\label{eq:constant-direction-detailed}
  \end{align}
  and
  \begin{align}
  \langle c{\bf 1},Ag\rangle_S
  &=\frac{3+3d}{2}\langle c{\bf 1},g\rangle_S
    -\langle c{\bf 1},Jg\rangle_S
    -2\langle c{\bf 1},L_Hg\rangle_S \nonumber\\
  &=-2\langle L_H(c{\bf 1}),g\rangle_S
   =0.\label{eq:cross-term-detailed}
\end{align}
Denote $|E(H)|$ by $h$.
By Rayleigh quotient and Lemma~\ref{lambda<m+1}, 
\begin{align}
  \langle g,Ag\rangle_S
  &=\frac{3+3d}{2}\|g\|_S^2
    -\langle g,Jg\rangle_S
    -2\langle g,L_Hg\rangle_S \nonumber\\
  &=\frac{3+3d}{2}\|g\|_S^2
    -2\langle g,L_Hg\rangle_S \nonumber\\
  &\ge \frac{3+3d}{2}\|g\|_S^2
    -2\lambda_{\max}(H)\|g\|_S^2 \nonumber\\
  &\ge \left(\frac{3+3d}{2}-2(h+1)\right)\|g\|_S^2 .\label{eq:orthogonal-direction-detailed}
\end{align}
The last inequality holds since $\lambda_{max}(H)\leq \max\{d_u+d_v:uv\in E(H)\}
\leq h+1$.
By~\eqref{eq:P-operator-form-detailed},~\eqref{eq:constant-direction-detailed},~\eqref{eq:cross-term-detailed}, and~\eqref{eq:orthogonal-direction-detailed},
\begin{align*}
  P_x(f)
  &=\langle c{\bf 1}+g,A(c{\bf 1}+g)\rangle_S \\
  &=\langle c{\bf 1},A(c{\bf 1})\rangle_S
    +2\langle c{\bf 1},Ag\rangle_S
    +\langle g,Ag\rangle_S \\
  &\ge \frac{d+3}{2}\|c{\bf 1}\|_S^2
    +\left(\frac{3+3d}{2}-2(h+1)\right)\|g\|_S^2 \\
  &\ge
  \min\left\{\frac{d+3}{2},\frac{3+3d}{2}-2(h+1)\right\}
  \bigl(\|c{\bf 1}\|_S^2+\|g\|_S^2\bigr) \\
  &=\alpha\sum_{u\in S}f(u)^2 .
\end{align*}
The proof is complete.
\end{proof}

Now we are ready to prove our main theorem.
\begin{proof}[Proof of Theorem~\ref{main}]
Let $f(x)=0$.
    By the assumption of the theorem, there are at most $ \left\lfloor \frac n2\right\rfloor+1$ edges in the complement graph $\overline{G}$ of $G$.
    Hence $r\leq r+h\leq \left\lfloor \frac n2\right\rfloor+1$.
    By Lemma~\ref{P+Q}, Lemma~\ref{Q(f)}, and Lemma~\ref{P(f)}, it suffices to prove $\alpha\geq \frac{r}{2}$.

 Note that $d+r=n-1$. 
 So 
 $$\frac{d+3}{2}-\frac{r}{2}=\frac{n+2-2r}{2}\geq0.$$
It remains to prove $\frac{3+3d}{2}-2(h+1)\geq\frac{r}{2}$.
Indeed,
\begin{align*}
  \frac{3+3d}{2}-2(h+1)-\frac r2
  &=\frac{3+3(n-1-r)}{2}-2h-2-\frac r2  \\
  &=\frac{3n-4r}{2}-2h-2\\
  &\ge \frac{3n-4r}{2}-2(\floor{\frac{n}{2}}+1-r)-2  \\
  &=\frac{3n}{2}-2\floor{\frac{n}{2}}-4.
\end{align*}
If $n=2m$, then $m\ge 4$, and $\frac{3n}{2}-2\floor{\frac{n}{2}}-4=m-4\geq 0$.
If $n=2m+1$, then $m\ge 3$, and
$\frac{3n}{2}-2\floor{\frac{n}{2}}-4 =m-\frac{5}{2}\ge 0.$
Therefore, $ \frac{3+3d}{2}-2(h+1)\ge \frac{r}{2}$.

Overall, $\alpha\ge \frac{r}{2}$, which implies $x$ satisfies $CD(0,\infty;x)$.
Since $x$ is chosen arbitrarily, $G$ satisfies $CD(0,\infty)$.
\end{proof}
   
\section{Sharpness discussions}\label{section:sharpness}

In this section, we present an example to illustrate that the bound $\binom{n}{2}-\floor{\frac{n}{2}}-2$ in Theorem~\ref{main} is sharp.
Let $G=(V,E)$ be the graph  of order $n$ defined as follows.

The vertex set \[V\coloneqq\{x\}\sqcup N_G(x)\sqcup R,\] where $|N_G(x)|=\ceil{\frac{n}{2}}-3$ and $|R|=\floor{\frac{n}{2}}+2$.
The edge set 
$$E\coloneqq E_1\sqcup  E_2\sqcup E_3,$$ where 
$E_1\coloneqq\{uv\ |\ u, v\in N_G(x)\cup \{x \} \text{ and } u\neq v\}$, $E_2\coloneqq \{uv\ | \ u\in N_G(x),\ v\in R\}$, and $E_3\coloneqq\{uv\ |\ u,v\in R \text{ and } u\neq v\}$.
Then, $G$ has exactly $\binom {n}{2}-\floor{\frac{n}{2}}-2$ edges.
A schematic of $G$ is depicted in Figure \ref{fig:enter-label}.
\begin{figure}[htbp]
    \centering    \begin{tikzpicture}[scale=1.15, line cap=round, line join=round]

\node[circle, fill, inner sep=2.1pt, label=left:{\Large \(x\)}] (x) at (-3,0) {};

\node[
  draw,
  ellipse,
  thick,
  minimum width=2.0cm,
  minimum height=2.45cm,
  inner sep=0pt
] (A) at (0,0) {\Large \(K_{\left\lceil \frac{n}{2}\right\rceil-3}\)};

\node[
  draw,
  ellipse,
  thick,
  minimum width=2.0cm,
  minimum height=2.45cm,
  inner sep=0pt
] (B) at (3.2,0) {\Large \(K_{\left\lfloor\frac{n}{2}\right\rfloor+2}\)};

\draw (x.30)  -- (A.150);
\draw (x.10)  -- (A.175);
\draw (x.-10) -- (A.185);
\draw (x.-30) -- (A.210);

\draw (A.35)  -- (B.145);
\draw (A.12)  -- (B.168);
\draw (A.-12) -- (B.192);
\draw (A.-35) -- (B.215);

\end{tikzpicture}
    \caption{A schematic of the graph $G$.}
    \label{fig:enter-label}
\end{figure}

Finally, we present a graph of order $n=6$ with exactly $\binom {n}{2}-\floor{\frac{n}{2}}-1$ edges such that some edges have negative Bakry-\'Emery curvature, as shown in Figure~\ref{n=6}.
The curvature on each edge was computed using the graph curvature calculator~\cite{CKLLS22}, which is a freely accessible interactive app at 
https://www.mas.ncl.ac.uk/graph-curvature/. Therefore, the condition $n\geq 7$ in Theorem~\ref{main} is necessary.

 \begin{figure}[ht]
     \centering
\includegraphics[width=0.6\textwidth,height=0.35\textwidth]{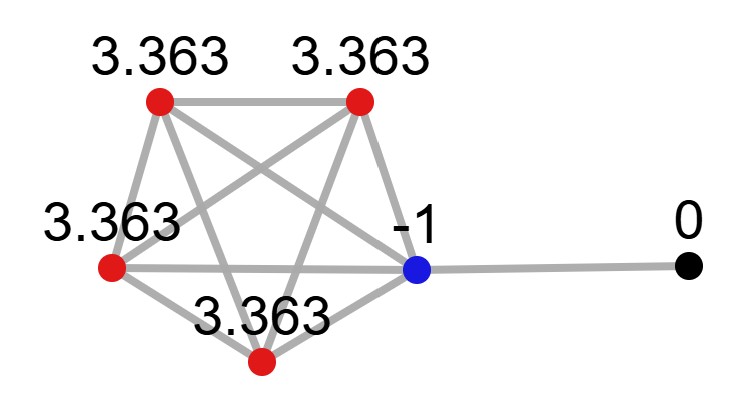}
     \caption{A graph of order 6 containing vertices of negative Bakry--\'Emery curvature.}
     \label{n=6}
 \end{figure}


\end{document}